\begin{document}

\newcommand{\wk}{\mbox{$\,<$\hspace{-5pt}\footnotesize )$\,$}}

\numberwithin{equation}{section}
\newtheorem{teo}{Theorem}
\newtheorem{lemma}{Lemma}

\newtheorem{coro}{Corollary}
\newtheorem{prop}{Proposition}
\theoremstyle{definition}
\newtheorem{definition}{Definition}
\theoremstyle{remark}
\newtheorem{remark}{Remark}

\newtheorem{scho}{Scholium}
\newtheorem{open}{Question}
\newtheorem{example}{Example}
\numberwithin{example}{section}
\numberwithin{lemma}{section}
\numberwithin{prop}{section}
\numberwithin{teo}{section}
\numberwithin{definition}{section}
\numberwithin{coro}{section}
\numberwithin{figure}{section}
\numberwithin{remark}{section}
\numberwithin{scho}{section}

\bibliographystyle{abbrv}

\title{Surface immersions in normed spaces from the affine point of view}
\date{}

\author{Vitor Balestro\footnote{Corresponding author}  \\ CEFET/RJ Campus Nova Friburgo \\ 28635000 Nova Friburgo \\ Brazil \\ vitorbalestro@id.uff.br \and Horst Martini \\ Fakult\"{a}t f\"{u}r Mathematik \\ Technische Universit\"{a}t Chemnitz \\ 09107 Chemnitz\\ Germany \\ martini@mathematik.tu-chemnitz.de \and  Ralph Teixeira \\ Instituto de Matem\'{a}tica e Estat\'{i}stica  \\ Universidade Federal Fluminense \\24210201 Niter\'{o}i\\ Brazil \\ ralph@mat.uff.br}

\maketitle

\begin{abstract} The aim of this paper is to investigate the differential geometry of immersed surfaces in three-dimensional normed spaces from the viewpoint of affine differential geometry. We endow the surface with a useful Riemannian metric which is closely related to normal curvature, and from this we re-calculate the Minkowski Gaussian and mean curvatures. These curvatures are also re-obtained in terms of ambient affine distance functions, and as a consequence we characterize minimal surfaces as the solutions of a certain differential equation. We also investigate in which cases it is possible that the affine normal and the Birkhoff normal vector fields of an immersion coincide, proving that this only happens when the geometry is Euclidean.

\end{abstract}

\noindent\textbf{Keywords}: affine normal field, Birkhoff-Gauss map, Birkhoff orthogonality, Blaschke immersion, distance function, Dupin indicatrix, (weighted) Dupin metric, minimal surface, Minkowski Gaussian curvature, Minkowski mean curvature, normed spaces, Riemannian metric

\bigskip

\noindent\textbf{MSC 2010:} 53A35, 53A15, 53A10, 58B20, 52A15, 52A21, 46B20

\section{Introduction}

The differential geometry of normed spaces is a topic of research that was studied by authors like Busemann \cite{Bus3}, Guggenheimer \cite{Gug2}, and Petty \cite{Pet}, and it is still far away from being comprehensively investigated. Its relation to Finsler geometry is nicely described in \cite{Bus2} and \cite{Bus7}; see also the more recent references \cite{shen} and \cite{shen2}. This paper is the second of a series of three papers devoted to study this topic (the other two papers are \cite{diffgeom} and \cite{diffgeom3}, see also \cite{Ba-Ma-Sho}). In the first paper \cite{diffgeom} we studied the differential geometry of surfaces immersed in normed spaces from the viewpoint of classical differential geometry. However, the methods used to define some curvature concepts came from affine differential geometry, and hence many questions related to this latter subject emerged. In this present paper we aim to address and answer some of these questions. \\

We begin by briefly describing the theory developed in \cite{diffgeom}. We work with an immersion $f:M\rightarrow(\mathbb{R}^3,||\cdot||)$ of a surface $M$ in the space $\mathbb{R}^3$ endowed with a norm $||\cdot||$, which is considered to be \emph{admissible}. This means that the \emph{unit sphere} $\partial B:=\{x\in\mathbb{R}^3:||x||=1\}$ of the \emph{normed} or \emph{Minkowski space} $(\mathbb{R}^3,||\cdot||)$ has strictly positive Gaussian curvature as a surface of the Euclidean space $(\mathbb{R}^3,\langle \cdot,\cdot\rangle)$, where $\langle\cdot,\cdot\rangle$ denotes the usual \emph{inner product} in $\mathbb{R}^3$. Note that the unit sphere is the boundary of the \emph{unit ball} $B:=\{x \in \mathbb{R}^3:||x|| \leq 1\}$, which is a compact, convex set with interior points centered at the origin. Respective homothetical copies are called \emph{Minkowski spheres} and \emph{Minkowski balls}. We say that a vector $v\in\mathbb{R}^3$ is \emph{Birkhoff orthogonal} to a plane $P \subseteq \mathbb{R}^3$ if for each $w \in P$ we have $||v + tw|| \geq ||v||$ for any $t \in \mathbb{R}$ (see \cite{alonso}). Geometrically, a vector $v$ is Birkhoff orthogonal to a plane $P$ if $P$ supports the unit ball of $(\mathbb{R}^3,||\cdot||)$ at $v/||v||$. Due to the admissibility of the norm, it follows that Birkhoff orthogonality is unique both on left and on right. \\

The \emph{Birkhoff-Gauss map} of $M$ is an analogue to the Gauss map defined in terms of Birkhoff orthogonality as follows: for each $p \in M$, the Birkhoff normal vector to $M$ at $p$ is a vector $\eta(p) \in \partial B$ which is Birkhoff orthogonal to the tangent plane to $M$ at $p$. Such a vector field can be globally defined if $M$ is orientable, and hence we will always assume this hypothesis. The immersion $f:M\rightarrow(\mathbb{R}^3,||\cdot||)$ with the Birkhoff normal vector field is an \emph{equiaffine immersion}, in the sense of \cite{nomizu} (see \cite{diffgeom} for a proof). \\

At each point, the eigenvalues of the differential map $d\eta_p$ are called \emph{principal curvatures}. Their product is the \emph{Minkowski Gaussian curvature}, and their arithmetic mean is the \emph{Minkowski mean curvature}. We also endow $M$ with an induced connection $\nabla$ by means of the \emph{Gauss equation}
\begin{align*} D_XY = \nabla_XY + h(X,Y)\eta,
\end{align*}
where $X,Y$ are smooth vector fields in $M$, and $h(X,Y)$ is a symmetric bilinear form which can be regarded as the second fundamental form in our context. We say that the immersion is \emph{nondegenerate} if the rank of $h$ equals $2$.For this bilinear form, we have the formula
\begin{align}\label{exph} h(X,Y) = \frac{\langle D_XY,\xi\rangle}{\langle \eta,\xi\rangle} = -\frac{\langle Y,d\xi_pX\rangle}{\langle\eta,\xi\rangle} = -\frac{\langle du^{-1}_{\eta(p)}Y,d\eta_pX\rangle}{\langle\eta,\xi\rangle},
\end{align}
where $\xi$ denotes the usual Euclidean Gauss map of $M$, and $u^{-1}$ is the Euclidean Gauss map of the unit sphere $\partial B$. Notice that we have $\eta = u\circ\xi$ (where $\circ$ denotes the usual composition of maps). We also define the \emph{normal curvature} $k_{M,p}(X)$ of $M$ at a point $p$ in direction $X$ to be the circular curvature of the curve obtained by intersecting $M$ with the plane spanned by $\eta(p)$ and $X$ (translated to pass through $p$, of course). For the normal curvature we have the equality
\begin{align}\label{normcurv} k_{M,p}(X) = \frac{\langle du^{-1}_{\eta(p)}X,d\eta_pX\rangle}{\langle du^{-1}_{\eta(p)}X,X\rangle}.
\end{align}

Now we describe the structure of the paper. In Section \ref{riemann} we endow the surface with a Riemannian metric which has a lot of interesting relations with its Minkowski normal curvature. Such metric will be very useful in Section \ref{distance}, where we re-obtain the (Minkowski) curvature concepts by means of ambient affine distance functions. Section \ref{blaschke} is devoted to the question when the Birkhoff normal field of a surface coincides with the \emph{affine normal field}. We prove that this is the case if and only if the ambient geometry is Euclidean and the surface is a Euclidean sphere. \\

As mentioned, in this paper we will continue our considerations in \cite{diffgeom}, dealing further on with these concepts. Other important references devoted to \emph{Minkowski geometry} (i.e., the geometry of finite dimensional real Banach spaces) are \cite{martini2}, \cite{martini1}, and \cite{thompson}. Regarding differential geometry, our main references are \cite{manfredo} and \cite{nomizu}. 

\section{A related Riemannian metric}\label{riemann}

In this section we endow an immersed surface with a certain Riemannian metric which appears naturally when studying the Minkowski normal curvature of a surface. \\

Let $f:M\rightarrow\mathbb{R}^3$ be an immersed surface with (Euclidean) Gauss map given by $\xi:M\rightarrow\partial B_e$. The \emph{Dupin indicatrix} of $M$ at $p$ is the curve in $T_pM$ formed by the vectors $V \in T_pM$ such that $\langle V,d\xi_pV\rangle = \pm1$. Since the unit sphere of the Minkowski norm is an immersed surface whose Gauss map is $u^{-1}$, we have that for each $q \in \partial B$ its Dupin indicatrix is determined by the solution of the equation $\langle du^{-1}_qV,V\rangle = 1$ in $T_q\partial B$ (where we may consider only the positive sign, since we are assuming that the norm is admissible, and hence the Gaussian curvature of $\partial B$ is strictly positive). It follows that the Dupin indicatrix of $\partial B$ at each point is an ellipse, and therefore induces a Euclidean metric (which may differ, however, from the ambient Euclidean metric). We will endow an immersed surface with a Riemannian metric by considering, in each of its tangent spaces, the metric given by the Dupin indicatrix of $\partial B$ at the parallel tangent space. At first glance it seems that this is a somewhat artificial construction, but we can sharply describe the Dupin indicatrix of the \textbf{Minkowski sphere} in terms of the principal directions of the \textbf{surface}.\\

Let $f:M\rightarrow(\mathbb{R}^3,||\cdot||)$ be an immersed surface, let $p \in M$, and let $V_1,V_2 \in T_pM$ be principal directions associated to the (Minkowski) principal curvatures $\lambda_1,\lambda_2 \in \mathbb{R}$, respectively. We may assume that $h(V_1,V_2) = 0$, since this is the case when $p$ is non-umbilic; if $p$ is umbilic, then every direction is principal (see \cite[Section 4]{diffgeom}). Now we re-scale $V_1$ and $V_2$ in order to have $\langle du^{-1}_{\eta(p)}V_1,V_1\rangle = \langle du^{-1}_{\eta(p)}V_2,V_2\rangle = 1$, where we remember the identification $T_{\eta(p)}\partial B \simeq T_pM$. From the proof of \cite[Theorem 5.2]{diffgeom} we have that $\langle du^{-1}_{\eta(p)}V_1,V_2 \rangle = \langle du^{-1}_{\eta(p)}V_2,V_1\rangle = 0$. Hence the Dupin indicatrix of $\partial B$ at $\eta(p)$ is the curve parametrized as
\begin{align}\label{dupin} [0,2\pi]\ni\theta \mapsto V(\theta) = V_1\cos\theta + V_2\sin\theta \in T_{\eta(p)}\partial B\simeq T_pM.
\end{align}
As mentioned previously, it is clear that this curve is an ellipse, and it is the unit circle of the metric induced by the inner product $\langle\cdot,\cdot\rangle_p:T_pM\times T_pM\rightarrow\mathbb{R}$ defined by the setting $\langle V_1,V_1\rangle_p = \langle V_2,V_2\rangle_p = 1$ and $\langle V_1,V_2\rangle_p = 0$ (which is merely the inner product $(X,Y)\mapsto\langle du^{-1}_{\eta(p)}X,Y\rangle$). From now on, we refer to this curve as the \emph{Dupin indicatrix at} $T_pM$, and to the associated metric as the \emph{Dupin metric at} $T_pM$. Notice that, in the classical setting, by this construction one would naturally re-obtain the restriction of the ambient metric to each tangent space. \\

The first application of this metric is a way to calculate the Minkowski mean curvature, analogous to the Euclidean subcase. In this particular case, it is known that the mean curvature can be calculated as the mean of the normal curvature over the unit circle of the tangent space. In other words, if $k_n(\theta)$ denotes the (Euclidean) normal curvature in the direction of a vector forming an angle $\theta$ with a fixed direction, then
\begin{align*} H_e = \frac{1}{2\pi}\int_0^{2\pi}k_n(\theta) \ d\theta,
\end{align*}
where $H_e$ denotes the Euclidean mean curvature of $M$. We obtain something similar for the general Minkowksi case, but now with the Dupin indicatrix of $T_pM$. In what follows, $H$ denotes the Minkowski mean curvature of $M$. 

\begin{prop} The Minkowski mean curvature of an immersed surface $M$ at a point $p \in M$ is the mean of the normal curvature as a function of the Dupin indicatrix of $T_pM$. 
\end{prop}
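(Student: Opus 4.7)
The plan is to compute $k_{M,p}(V(\theta))$ explicitly using the principal basis $V_1, V_2$ and then average in $\theta$, mimicking the Euler-formula proof familiar from the Euclidean subcase. By the hypothesis of the preceding discussion we may assume $p$ is non-umbilic (the umbilic case is trivial, since then every direction is principal with a common curvature $\lambda = H$, and the normal curvature is constantly $\lambda$ on the indicatrix). The vectors $V_1, V_2$ are principal directions, so $d\eta_p V_i = \lambda_i V_i$ for $i=1,2$, and they have been rescaled so that the bilinear form $(X,Y)\mapsto \langle du^{-1}_{\eta(p)}X,Y\rangle$ satisfies $\langle du^{-1}_{\eta(p)}V_i,V_j\rangle = \delta_{ij}$.

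Next I would substitute $V(\theta) = V_1\cos\theta + V_2\sin\theta$ into formula (\ref{normcurv}). The denominator becomes $\cos^2\theta + \sin^2\theta = 1$ by the Dupin normalization. For the numerator, bilinearity together with $d\eta_p V_i = \lambda_i V_i$ and the orthonormality relations give
\begin{align*}
\langle du^{-1}_{\eta(p)} V(\theta), d\eta_p V(\theta)\rangle = \lambda_1\cos^2\theta + \lambda_2\sin^2\theta,
\end{align*}
since the mixed terms carry the factors $\langle du^{-1}_{\eta(p)}V_1,V_2\rangle = \langle du^{-1}_{\eta(p)}V_2,V_1\rangle = 0$. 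Hence $k_{M,p}(V(\theta)) = \lambda_1\cos^2\theta + \lambda_2\sin^2\theta$, a Minkowski analogue of the classical Euler formula relative to the Dupin indicatrix.

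Finally, averaging over $[0,2\pi]$ yields
\begin{align*}
\frac{1}{2\pi}\int_0^{2\pi} k_{M,p}(V(\theta))\,d\theta = \frac{\lambda_1 + \lambda_2}{2} = H,
\end{align*}
which is the statement. There is essentially no obstacle here; the only subtle point worth flagging in the write-up is that the Dupin normalization of $V_1, V_2$ is exactly what eliminates the denominator of (\ref{normcurv}) and reduces the computation to averaging a positive quadratic form on the unit circle of the Dupin metric.
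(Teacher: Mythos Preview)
Your proposal is correct and follows essentially the same route as the paper: derive the Euler-type formula $k_{M,p}(V(\theta)) = \lambda_1\cos^2\theta + \lambda_2\sin^2\theta$ from (\ref{normcurv}) using the Dupin-orthonormal principal basis, and then average over $[0,2\pi]$. Your write-up is in fact slightly more explicit than the paper's, since you spell out why the denominator in (\ref{normcurv}) equals $1$ and why the cross terms vanish.
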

\begin{proof} From (\ref{normcurv}) we get the equality 
\begin{align*} k_{M,p}(\theta) := k_{M,p}(V(\theta)) = \lambda_1(\cos\theta)^2 + \lambda_2(\sin\theta)^2.
\end{align*}
Hence, a simple calculation gives
\begin{align*} \frac{1}{2\pi}\int_{0}^{2\pi}k_{M,p}(\theta) \ d\theta = \frac{\lambda_1}{2\pi}\int_{0}^{2\pi}(\cos\theta)^2d\theta + \frac{\lambda_2}{2\pi}\int_0^{2\pi}(\sin\theta)^2d\theta = \frac{\lambda_1+\lambda_2}{2} = H,
\end{align*}
as we claimed. Notice that $2\pi$ equals the length of the Dupin indicatrix in the metric derived from it. 

\end{proof}

The Dupin metric in a tangent space $T_pM$ gives rise to a natural orthogonality relation: we say that $X,Y \in T_pM$ are \emph{Dupin orthogonal} whenever $\langle X,Y\rangle_p = 0$. In the Euclidean subcase, it is well known that the sum of the normal curvatures of $M$ at $p$ in a pair of orthogonal directions equals twice the mean curvature. We will show that this is true in the Minkowski case if one replaces usual orthogonality by Dupin orthogonality.

\begin{prop} Let $X,Y \in T_pM$ be non-zero vectors. Then
\begin{align*} k_{M,p}(X) + k_{M,p}(Y) = 2H
\end{align*}
if and only if $X$ and $Y$ are Dupin orthogonal or Dupin complementary.
\end{prop}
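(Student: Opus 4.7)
The plan is to reduce everything to the parametrization (\ref{dupin}) of the Dupin indicatrix. Since the normal curvature $k_{M,p}$ depends only on direction (it is invariant under scaling of its argument, as is clear from (\ref{normcurv})), we may replace $X$ and $Y$ by their representatives on the Dupin indicatrix of $T_pM$, and so write $X = V(\alpha)$ and $Y = V(\beta)$ for some angles $\alpha,\beta \in [0,2\pi)$.

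Using the formula $k_{M,p}(V(\theta)) = \lambda_1\cos^2\theta + \lambda_2\sin^2\theta$ established in the previous proposition, and remembering that $2H = \lambda_1+\lambda_2$, a direct rearrangement shows that the equation $k_{M,p}(X)+k_{M,p}(Y)=2H$ is equivalent to
\begin{align*}
(\lambda_1-\lambda_2)\bigl(\cos^2\alpha + \cos^2\beta - 1\bigr) = 0,
\end{align*}
after using $\sin^2\alpha + \sin^2\beta - 1 = -(\cos^2\alpha + \cos^2\beta - 1)$.

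Assuming first that $p$ is non-umbilic (so $\lambda_1\neq\lambda_2$), the condition reduces to $\cos^2\alpha = \sin^2\beta$, which factors as
\begin{align*}
(\cos\alpha-\sin\beta)(\cos\alpha+\sin\beta) = 0.
\end{align*}
The first factor vanishes precisely when $\alpha + \beta \equiv \pi/2 \pmod{\pi}$, which expresses that $X$ and $Y$ are \emph{Dupin complementary} (their angles from $V_1$, measured in the Dupin metric, are complementary); the second factor vanishes precisely when $\alpha - \beta \equiv \pi/2 \pmod{\pi}$, which expresses Dupin orthogonality (since $V_1,V_2$ are by construction a Dupin-orthonormal basis of $T_pM$). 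This proves the equivalence in the non-umbilic case in both directions.

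For the umbilic case $\lambda_1=\lambda_2$ one has $k_{M,p}(V)=H$ for every direction $V$, so the left-hand side of the identity always equals $2H$; but then every direction is principal, the Dupin indicatrix is a Dupin-circle, and any two directions can be taken as orthogonal or complementary in an essentially vacuous way, so the statement still holds. The main technical point in the argument is simply the trigonometric reduction described above; the only subtlety is correctly translating the algebraic condition $\cos^2\alpha = \sin^2\beta$ into the two geometric alternatives (orthogonal versus complementary), which is where the notion of Dupin complementarity enters naturally.
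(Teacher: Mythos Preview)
Your approach is essentially the paper's: parametrize $X$ and $Y$ on the Dupin indicatrix, use the formula $k_{M,p}(V(\theta))=\lambda_1\cos^2\theta+\lambda_2\sin^2\theta$, and reduce the identity to a trigonometric condition on the two angles.

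There is, however, a slip in your identification of the two alternatives. The vanishing of $\cos\alpha-\sin\beta$ does \emph{not} correspond precisely to $\alpha+\beta\equiv\pi/2\pmod\pi$: take $\alpha=\pi/4$, $\beta=3\pi/4$; then $\cos\alpha=\sin\beta=\sqrt{2}/2$ so your first factor vanishes, yet $\alpha+\beta=\pi$ while $\alpha-\beta=-\pi/2$, so this pair is Dupin orthogonal, not complementary. Each of your two linear factors actually mixes orthogonal and complementary instances. The factorization that cleanly separates the two cases is
\[
\cos^2\alpha-\sin^2\beta=\cos(\alpha+\beta)\cos(\alpha-\beta),
\]
giving $\cos(\alpha-\beta)=0$ (Dupin orthogonal) or $\cos(\alpha+\beta)=0$ (Dupin complementary); this is exactly the route the paper takes. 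With that correction your argument is complete and coincides with the paper's proof. Your handling of the umbilic case is also in line with the paper's, which simply declares that case trivial.
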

\begin{proof} Let $X$ and $Y$ be given in the Dupin indicatrix of $T_pM$ as $V(\theta_0)$ and $V(\theta_1)$, respectively. Then Dupin orthogonality and Dupin complementarity of $X$ and $Y$ give $\cos(\theta_0 - \theta_1) = 0$ and $\cos(\theta_0+\theta_1) = 0$, respectively. Hence we may write $\theta_1 = \theta_0 \pm \frac{\pi}{2}$. It follows that
\begin{align*} k_{M,p}(X) + k_{M,p}(Y) = \lambda_1(\cos^2\theta_0+ \sin^2\theta_0)+\lambda_2(\cos^2\theta_0+ \sin^2\theta_0) = \lambda_1+\lambda_2 = 2H.
\end{align*}
Now assume that $k_{M,p}(X) + k_{M,p}(Y) = 2H$. Suppose also that $\lambda_2\neq\lambda_1$, since the equality case is trivial. Then we may write
\begin{align*} \lambda_1(\cos^2\theta_0 + \cos^2\theta_1) + \lambda_2(\sin^2\theta_0 + \sin^2\theta_1) = \lambda_1 + \lambda_2,
\end{align*}
and this can be easily rewritten as
\begin{align*} (\lambda_2-\lambda_1)(\sin^2\theta_0 + \sin^2\theta_1) = \lambda_2-\lambda_1.
\end{align*}
It follows that $\cos(\theta_0-\theta_1) = 0$ or $\cos(\theta_0+\theta_1) = 0$, and therefore $X$ and $Y$ are Dupin orthogonal or Dupin complementary.

\end{proof}

\begin{coro} Let $f:M\rightarrow(\mathbb{R}^3,||\cdot||)$ be an immersed hypersurface whose Minkowski Gaussian curvature is negative. If the Minkowski mean curvature of $M$ at $p \in M$ equals $0$, then the asymptotic directions of $M$ at $p$ are Dupin orthogonal.
\end{coro}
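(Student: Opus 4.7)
The plan is to exploit the explicit sinusoidal expression for the normal curvature that was already derived in the proof of the preceding proposition, namely
\begin{align*}
k_{M,p}(V(\theta)) = \lambda_1 \cos^2\theta + \lambda_2 \sin^2\theta,
\end{align*}
combined with the hypothesis $H=0$ which forces $\lambda_2 = -\lambda_1$. This immediately collapses the above to $k_{M,p}(V(\theta)) = \lambda_1 \cos(2\theta)$. Moreover, the assumption of negative Minkowski Gaussian curvature gives $\lambda_1\lambda_2 = -\lambda_1^2 < 0$, so $\lambda_1 \neq 0$, which guarantees that asymptotic directions exist and that $\lambda_1 \neq \lambda_2$.

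Next, I would solve $k_{M,p}(V(\theta)) = 0$ directly. Since $\lambda_1 \neq 0$, the equation reduces to $\cos(2\theta) = 0$, whose solutions in $[0,\pi)$ are $\theta = \pi/4$ and $\theta = 3\pi/4$. These two values parametrize the two asymptotic lines of $M$ at $p$ along the Dupin indicatrix of $T_pM$. Their parameter difference is $3\pi/4 - \pi/4 = \pi/2$, so $\cos(\theta_0 - \theta_1) = 0$, which by the defining relation of the Dupin metric is precisely the statement that the two asymptotic directions are Dupin orthogonal.

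Alternatively, and perhaps more elegantly for the write-up, one could invoke the previous proposition: two asymptotic directions $X,Y$ satisfy $k_{M,p}(X) + k_{M,p}(Y) = 0 = 2H$, hence they must be either Dupin orthogonal or Dupin complementary. To rule out the complementary case under our hypotheses, note that for the asymptotic parameters $\theta_0 = \pi/4$ and $\theta_1 = 3\pi/4$ one has $\cos(\theta_0 + \theta_1) = \cos\pi = -1 \neq 0$, so Dupin complementarity fails and only Dupin orthogonality remains.

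I do not anticipate any real obstacle: the argument is a direct specialization of the previous proposition together with the trigonometric identity $\cos^2\theta - \sin^2\theta = \cos(2\theta)$. The only point that deserves a short remark is that the vanishing Gaussian curvature case is excluded by the sign hypothesis, which ensures $\lambda_1 \neq 0$ and therefore the nondegeneracy of the asymptotic directions.
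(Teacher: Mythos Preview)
Your proposal is correct and follows essentially the same route as the paper: the paper simply recalls that asymptotic directions are characterized by $k_{M,p}(X)=0$ and then invokes the preceding proposition, exactly as in your ``alternative'' formulation. Your write-up is in fact more careful than the paper's, since you explicitly exclude the Dupin complementary case by computing $\theta_0+\theta_1=\pi$, whereas the paper leaves this verification implicit.
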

\begin{proof} One just has to recall that, due to \cite[Corollary 5.3]{diffgeom}, a direction $X \in T_pM$ is \emph{asymptotic} if and only if $k_{M,p}(X) = 0$. Hence the result comes directly from the proposition above. 

\end{proof}

\begin{remark} In the Euclidean subcase, the sum of normal curvatures of an immersed surface $M$ at a point $p \in M$ at two orthogonal directions is always a constant (see \cite{manfredo}). In the general Minkowski case, the Dupin indicatrix at each tangent plane (which depends only on the ambient Minkowski metric, and not on the surface) somehow ``organizes" the directions in an analogous way. \\
\end{remark}

From the affine viewpoint, it will be better to work with another Riemannian metric, obtained from the Dupin metric (and which preserves its orthogonality relation). We call this new metric the \emph{weighted Dupin metric}, and it is defined as
\begin{align*} b(X,Y) := \frac{\langle du^{-1}_{\eta(p)}X,Y\rangle}{\langle\eta(p),\xi(p)\rangle},
\end{align*} 
for $p \in M$ and $X,Y \in T_pM$. Now we will establish a relation between the weighted Dupin metric of an immersed surface and its Minkowski Gaussian curvature. In classical differential geometry, it is fairly known that the (Euclidean) Gaussian curvature of a surface equals the ratio between the determinants (with respect to any fixed basis) of the second and first fundamental forms, respectively. 

\begin{prop} Let $h_{ij}$ and $b_{ij}$ denote the affine fundamental form $h$ and the weighted Dupin metric in a certain fixed basis. Then 
\begin{align*} K = \frac{\mathrm{det}(h_{ij})}{\mathrm{det}(b_{ij})},
\end{align*}
where $K$ is the Minkowski Gaussian curvature.
\end{prop}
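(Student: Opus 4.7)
The plan is to pick a convenient basis of $T_pM$ in which both matrices $(h_{ij})$ and $(b_{ij})$ become diagonal, and then read off the ratio of determinants directly. First I would note that the claimed ratio $\det(h_{ij})/\det(b_{ij})$ is independent of the choice of basis: under a change of basis with matrix $P$, both $(h_{ij})$ and $(b_{ij})$ transform as $M \mapsto P^T M P$, so each determinant picks up the same factor $\det(P)^2$, which cancels in the quotient. Hence it suffices to exhibit the equality in one cleverly chosen basis.

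I would then take the basis $\{V_1, V_2\}$ of Minkowski principal directions at $p$, rescaled exactly as in the paragraph preceding (\ref{dupin}), so that $\langle du^{-1}_{\eta(p)} V_i, V_j\rangle = \delta_{ij}$. This is precisely the basis in which the Dupin indicatrix of $T_pM$ becomes a unit circle; in the umbilic case one can still choose such a pair, since every direction is principal. In this basis the weighted Dupin metric is diagonal with $b_{ij} = \delta_{ij}/\langle \eta(p),\xi(p)\rangle$, so
\begin{align*}
\det(b_{ij}) = \frac{1}{\langle\eta(p),\xi(p)\rangle^2}.
\end{align*}

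For the affine fundamental form, I would plug $d\eta_p V_i = \lambda_i V_i$ into the last expression of (\ref{exph}):
\begin{align*}
h_{ij} = -\frac{\langle du^{-1}_{\eta(p)} V_j, d\eta_p V_i\rangle}{\langle\eta,\xi\rangle} = -\frac{\lambda_i \langle du^{-1}_{\eta(p)} V_j, V_i\rangle}{\langle\eta,\xi\rangle} = -\frac{\lambda_i \delta_{ij}}{\langle\eta,\xi\rangle},
\end{align*}
whence $\det(h_{ij}) = \lambda_1 \lambda_2/\langle\eta,\xi\rangle^2 = K/\langle\eta,\xi\rangle^2$. Dividing produces the desired identity. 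The only genuinely non-routine step is the invariance-under-basis-change argument that reduces the problem to a single basis; once it is in place, the principal-direction basis already assembled in Section~\ref{riemann} (together with the orthogonality $\langle du^{-1}_{\eta(p)}V_1,V_2\rangle = 0$ recalled there) trivializes the rest of the computation.
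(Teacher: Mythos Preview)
Your proof is correct and follows essentially the same approach as the paper: both arguments diagonalize $h$ and $b$ simultaneously in the basis of Minkowski principal directions, using $\langle du^{-1}_{\eta(p)}V_1,V_2\rangle = 0$ and $h(V_1,V_2)=0$, and then read off the ratio of the diagonal entries. Your explicit basis-invariance remark and your normalization $\langle du^{-1}_{\eta(p)}V_i,V_j\rangle=\delta_{ij}$ are minor cosmetic additions; the paper instead carries the unnormalized factors through and lets them cancel, and it separates out the case $K=0$ (which your computation actually handles uniformly).
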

\begin{proof} We can assume that the Gaussian curvature is non-zero, since this case is straightforward. By assuming this, we can take a local frame $\{V_1,V_2\}$ of (Minkowski) principal directions, i.e., such that $d\eta_qV_1 = \lambda_1V_1$ and $d\eta_qV_2 = \lambda_2V_2$, for each $q$ in a small neighborhood where the principal curvatures $\lambda_1$ and $\lambda_2$ do not vanish. From the proof of \cite[Theorem 5.2]{diffgeom} we have
\begin{align*} \langle du^{-1}_{\eta(p)}V_1,V_2\rangle = \langle V_1,du^{-1}_{\eta(p)}V_2\rangle = 0,
\end{align*}
and since $V_1$ and $V_2$ are conjugate in the classical sense (see \cite[Lemma 4.2]{diffgeom}), we also have $h(V_1,V_2) = 0$ (in case that $p$ is umbilic, we just choose $V_1$ and $V_2$ to be conjugate). Now we compute
\begin{align*} h(V_1,V_1) = -\frac{\langle V_1,d\xi_pV_1\rangle}{\langle \eta,\xi\rangle} = -\frac{\langle V_1,du^{-1}_{\eta(p)}\circ d\eta_pV_1\rangle}{\langle \eta,\xi\rangle} = -\frac{\lambda_1\langle V_1,du^{-1}_{\eta(p)}V_1\rangle}{\langle\eta,\xi\rangle},
\end{align*}
and an analogue holds for $h(V_2,V_2)$. Finally, in the basis $\{V_1,V_2\}$ we have
\begin{align*} \frac{\mathrm{det}(h_{ij})}{\mathrm{det}(b_{ij})} = \frac{h(V_1,V_1)h(V_2,V_2)}{b(V_1,V_1)b(V_2,V_2)} = \frac{\lambda_1\lambda_2\langle du^{-1}_{\eta(p)}V_1,V_1\rangle \langle du^{-1}_{\eta(p)}V_2,V_2\rangle \langle\eta,\xi\rangle^{-2}}{\langle du^{-1}_{\eta(p)}V_1,V_1\rangle \langle du^{-1}_{\eta(p)}V_2,V_2\rangle \langle\eta,\xi\rangle^{-2}} =  \lambda_1\lambda_2,
\end{align*}
and the latter is the Minkowski Gaussian curvature. This is what we wanted to verify. 

\end{proof}

\section{Distance functions and curvatures}\label{distance}

In this section we obtain the Minkowski curvatures of a surface in terms of distance functions. Recall that a point $p \in M$ of the domain of a function $g:M\rightarrow\mathbb{R}$ is said to be a \emph{critical point} if $dg_p = 0$. Let $p \in M$ be a fixed point. For each $q \in M$, we can decompose the vector $p-q$ as follows:
\begin{align}\label{distfunc} p-q = g(q)\eta(p) + V(q),
\end{align}
where $V(q) \in T_pM$ is the projection of $q-p$ on $T_pM$ and $g:M\rightarrow\mathbb{R}$ is a smooth function. The function $g$ can be regarded as the Minkowski distance from $q \in M$ to the plane $p\oplus T_pM$, where $\oplus$ denotes the direct sum (geometrically, we are simply translating $T_pM$ such that its origin lies at $p$).
\begin{lemma} The point $p \in M$ is a critical point of the function $g$ defined above. 
\end{lemma}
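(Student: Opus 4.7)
My plan is to differentiate the defining relation \eqref{distfunc} along a curve through $p$ and exploit the fact that $\eta(p)$ is transverse to $T_pM$.

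More precisely, let $X\in T_pM$ be arbitrary and choose a smooth curve $\gamma:(-\varepsilon,\varepsilon)\to M$ with $\gamma(0)=p$ and $\gamma'(0)=X$. Substituting $q=\gamma(t)$ into \eqref{distfunc} and observing that $V:M\to T_pM$ is a smooth map into the fixed vector space $T_pM$ (so its differential at $p$ takes values in $T_pM$), I would differentiate at $t=0$ to obtain
\begin{align*}
-X \;=\; dg_p(X)\,\eta(p) \;+\; dV_p(X),
\end{align*}
where $dV_p(X)\in T_pM$.

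Rearranging gives $dg_p(X)\,\eta(p) = -X - dV_p(X) \in T_pM$. On the other hand, $\eta(p)\notin T_pM$: by definition, $\eta(p)$ is Birkhoff orthogonal to $T_pM$, and since Birkhoff orthogonality forces $T_pM$ to support the unit ball at $\eta(p)$, the vector $\eta(p)$ is genuinely transverse to $T_pM$. Consequently $dg_p(X)=0$, and since $X\in T_pM$ was arbitrary, $p$ is a critical point of $g$.

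The proof is essentially a one-line verification once the decomposition \eqref{distfunc} is differentiated, so there is no serious obstacle; the only conceptual ingredient is the transversality of $\eta(p)$ to the tangent plane, which is automatic from the admissibility assumption on the norm. I should, however, be slightly careful to justify the smoothness of $g$ (and hence of $V$), which follows from writing $g(q)=\ell(p-q)$ for the unique linear functional $\ell$ that annihilates $T_pM$ and sends $\eta(p)$ to $1$.
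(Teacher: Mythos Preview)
Your proof is correct and follows essentially the same approach as the paper: differentiate the defining decomposition \eqref{distfunc} at $p$ and use that $\eta(p)$ is transverse to $T_pM$. The only cosmetic difference is that the paper phrases the tangential part via the Gauss equation $D_XV=\nabla_XV+h(X,V)\eta$ and then invokes $V(p)=0$, whereas you argue directly that $dV_p(X)\in T_pM$ because $V$ takes values in the fixed plane $T_pM$; your formulation is arguably cleaner, and your remark on the smoothness of $g$ via the linear functional $\ell$ is a nice addition.
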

\begin{proof} Let $X$ denote a smooth extension of a fixed vector $X \in T_pM$ in a neighborhood of $p$ (with a little abuse of notation). Differentiating (\ref{distfunc}) with respect to $X$, and evaluating at $p$, we have
\begin{align*} -X = (Xg)\eta + D_XV = (Xg)\eta + \nabla_XV + h(X,V)\eta.
\end{align*}
It follows that $dg_pX = -h(X,V) = 0$, since $V(p) = 0$. 

\end{proof}

In standard affine differential geometry, one can define an analogue of the classical Hessian in a manifold $M$ endowed with a nondegenerate bilinear form $h:M\times M\rightarrow\mathbb{R}$. Namely, the \emph{h-Hessian} $\mathrm{hess}_hf$ of a map $f:(M,h)\rightarrow\mathbb{R}$ is defined as 
\begin{align}\label{hessh}
\mathrm{hess}_hf(X,Y) := X(Yf) - (\bar{\nabla}_XY)f, 
\end{align}
for any $p \in M$ and any $X,Y \in T_pM$, where $\bar{\nabla}$ is the Levi-Civita connection of $h$. For our purpose, we will consider the Hessian in $M$ with respect to the weighted Dupin metric. Denoting by $\hat{\nabla}$ the Levi-Civita connection of the weighted Dupin metric, we define the \emph{b-Hessian} of the immersion $f:M\rightarrow\mathbb{R}$ to be
\begin{align}\label{hessb} \mathrm{hess}_bf(X,Y) := X(Yf) - (\hat{\nabla}_XY)f.
\end{align}

As a consequence of the previous lemma, it follows that the $b$-Hessian $\mathrm{hess}_bg$ of $g$ at $p$ is given by $\mathrm{hess}_bg(X,Y)|_p = X(Yg)|_p$, since $p$ is a critical point of $g$. 

\begin{teo} The Hessian of the function $g$ defined above at $p \in M$ equals $-h$ at $p$, where $h$ is the affine fundamental form. In particular, if $X$ is unit in the weighted Dupin metric, then
\begin{align*}  k_{M,p}(X) = \mathrm{hess}_bg(X,X)|_p.
\end{align*}
\end{teo}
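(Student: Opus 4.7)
The plan is to derive an explicit formula for $g$ and then differentiate it directly. Taking the Euclidean inner product of both sides of (\ref{distfunc}) with $\xi(p)$, and noting that $V(q) \in T_pM$ is Euclidean-orthogonal to $\xi(p)$, one obtains
\begin{align*}
g(q) = \frac{\langle p - f(q), \xi(p)\rangle}{\langle \eta(p), \xi(p)\rangle}.
\end{align*}
Here both $\xi(p)$ and $\langle \eta(p), \xi(p)\rangle$ are constants with respect to $q$, which is the feature that makes the calculation manageable.

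Next, I would extend $X, Y \in T_pM$ to smooth tangent vector fields on $M$ near $p$. Differentiating once yields $(Xg)(q) = -\langle X(q), \xi(p)\rangle / \langle \eta(p), \xi(p)\rangle$, which vanishes at $q=p$ (reproving the lemma, since $X(p) \in T_pM$ is Euclidean-orthogonal to $\xi(p)$). Differentiating a second time,
\begin{align*}
Y(Xg)|_p = -\frac{\langle D_Y X|_p, \xi(p)\rangle}{\langle \eta(p), \xi(p)\rangle},
\end{align*}
and I would then invoke the Gauss equation $D_Y X|_p = \nabla_Y X|_p + h(X,Y)|_p\,\eta(p)$. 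The tangential term $\nabla_Y X|_p$ lies in $T_pM$ and so is Euclidean-orthogonal to $\xi(p)$; only the $h$-term survives, giving $Y(Xg)|_p = -h(X,Y)|_p$. Since $p$ is a critical point of $g$, the connection term $(\hat{\nabla}_X Y)g$ in the definition (\ref{hessb}) vanishes at $p$, so $\mathrm{hess}_bg(X,Y)|_p = X(Yg)|_p = -h(X,Y)|_p$.

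For the normal curvature identity, I would compare formulas (\ref{exph}) and (\ref{normcurv}) at $Y = X$ to obtain
\begin{align*}
k_{M,p}(X) = -\,h(X,X)\,\frac{\langle \eta, \xi\rangle}{\langle du^{-1}_{\eta(p)} X, X\rangle}.
\end{align*}
The hypothesis that $X$ has weighted Dupin length $1$ means exactly $\langle du^{-1}_{\eta(p)} X, X\rangle = \langle \eta(p), \xi(p)\rangle$, so the ratio collapses to $-h(X,X)$, which by the first part equals $\mathrm{hess}_b g(X,X)|_p$.

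The step requiring most care is the clean application of the Gauss equation after the second differentiation: one must keep the fixed ambient vector $\xi(p)$ distinct from the $q$-varying field $\xi$, and observe that although $\xi(p)$ is \emph{not} the Birkhoff normal $\eta(p)$, it nevertheless annihilates the tangential component $\nabla_Y X|_p$ because both Gauss maps have the same tangent plane at $p$. Everything else is routine, and the conclusion is automatically compatible with the symmetry of $h$.
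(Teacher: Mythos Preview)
Your proposal is correct and follows essentially the same route as the paper: both take the Euclidean inner product of (\ref{distfunc}) with the fixed vector $\xi(p)$, differentiate twice, and identify the result with $-h$ via (\ref{exph}); you simply unpack (\ref{exph}) through the Gauss equation rather than citing it directly, and you derive the normal-curvature formula from (\ref{exph}) and (\ref{normcurv}) instead of quoting it as (\ref{normh}). The only cosmetic slip is that you compute $Y(Xg)|_p$ but then state the Hessian as $X(Yg)|_p$; since both equal $-h(X,Y)$ this is harmless.
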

\begin{proof} We evaluate the (Euclidean) inner product of (\ref{distfunc}) with $\xi(p)$ to obtain
\begin{align*} \langle p-q,\xi(p)\rangle = g(q)\langle \eta(p),\xi(p)\rangle.
\end{align*}
Let $X,Y \in T_pM$ and denote by the same letters smooth extensions of these vectors to a neighborhood of $p$. Derivating the above expression with respect to $Y$ and $X$, respectively, and evaluating at $p$ yields
\begin{align*} -\langle D_XY,\xi\rangle = X(Y(g))\langle\eta,\xi\rangle,
\end{align*}
where the reader may notice that $\eta$ and $\xi$ are always evaluated at $p$; that is why their derivatives vanish. Since $p$ is a critical point of $g$, from (\ref{exph}) we get
\begin{align}\label{hessandh} \mathrm{hess}_bg(X,Y)|_p = X(Yg)|_p = - \frac{\langle D_XY,\xi\rangle}{\langle \eta,\xi\rangle} = -h(X,Y).
\end{align}
The claim on the normal curvature comes straightforwardly from the formula
\begin{align}\label{normh} k_{M,p}(X) = -\frac{h(X,X)\langle\eta,\xi\rangle}{\langle du^{-1}_{\eta(p)}X,X\rangle},
\end{align}
obtained in \cite[Corollary 5.3]{diffgeom}. The reader may notice that here we are generalizing a well known result from classical differential geometry by regarding the affine fundamental form as the second fundamental form, and normalizing with respect to the weighted Dupin metric (instead of the usual metric). 

\end{proof}

\begin{remark} Notice that equality (\ref{normh}) can be written as 
\begin{align*} k_{M,p}(X) = -\frac{h(X,X)}{b(X,X)},
\end{align*}
which makes the Minkowski normal curvature analogous to the usual Euclidean normal curvature if one regards $h$ as the second fundamental form and the weighted Dupin metric as the first fundamental form. Of course, this is indeed the case if the norm in $\mathbb{R}^3$ is Euclidean.\\
\end{remark}

As in the Euclidean subcase, we will obtain the curvatures of an immersed surface in terms of the distances of the points of the surface to a fixed point in $\mathbb{R}^3$. Let $a \in \mathbb{R}^3\setminus M$ be a fixed point, and define the \emph{distance function}  $D_a:M\rightarrow\mathbb{R}$  of $M$ to $a$ as $D_a(q) = ||q-a||$, for $q \in M$. Notice that the level sets of $D_a$ are the spheres $S_{\rho}(a):=\{x\in \mathbb{R}^3:||x-a|| = \rho\}$, $\rho \geq 0$, and hence a point $p \in M$ is a critical point of $D_a$ if and only if $T_pS_{||p-a||}(a) = T_pM$. In other words, $p \in M$ is a critical point of $D_a$ if and only if $p-a$ is Birkhoff orthogonal to $T_pM$. \\

\begin{prop} Let $p\in M$ be a critical point of the distance function $D_a:M\rightarrow\mathbb{R}$, where $a \in \mathbb{R}^3\setminus M$. Then we have the equivalence
\begin{align*} \mathrm{hess}_bD_a(V,V)|_p = 0 \ \Leftrightarrow \ k_{M,p}(V) = \frac{1}{D_a(p)}
\end{align*}
for any nonzero vector $V \in T_pM$. 
\end{prop}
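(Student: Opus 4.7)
My plan is to Taylor-expand $D_a$ around the critical point $p$ using the geometric decomposition from (\ref{distfunc}), reduce to what was computed in the preceding Theorem for the function $g$, and then read off when the resulting quadratic form vanishes. Since $p$ is critical for $D_a$, the difference $p-a$ is Birkhoff orthogonal to $T_pM$, hence a scalar multiple of $\eta(p)$; after orienting $\eta$ so that $p-a = D_a(p)\,\eta(p)$, the decomposition (\ref{distfunc}) yields
\begin{align*}
q-a \;=\; \bigl(D_a(p)-g(q)\bigr)\eta(p) - V(q).
\end{align*}

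The analytic heart of the argument is a second-order expansion of the Minkowski norm near $\eta(p)$ along $T_pM$:
\begin{align*}
\|\eta(p)+w\| \;=\; 1 + \tfrac12\, b(w,w) + O(\|w\|^3), \qquad w\in T_pM.
\end{align*}
The first-order term vanishes because Birkhoff orthogonality makes $T_pM$ the (Euclidean) tangent plane to $\partial B$ at $\eta(p)$. For the second-order term, I would represent $\partial B$ locally as a graph over $\eta(p)+T_pM$ in direction $\xi(p)=u^{-1}(\eta(p))$; since $u^{-1}$ is the Euclidean Gauss map of $\partial B$, the quadratic part of this graph is $\tfrac12\langle du^{-1}_{\eta(p)}w,w\rangle$, and radially projecting $\eta(p)+w$ onto $\partial B$ scales the Minkowski norm by the factor $1+\tfrac12\langle du^{-1}_{\eta(p)}w,w\rangle/\langle\eta(p),\xi(p)\rangle$, which by definition of $b$ is exactly $1+\tfrac12 b(w,w)$.

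Pulling the factor $D_a(p)-g(q)$ out of the norm and applying this expansion produces
\begin{align*}
D_a(q) \;=\; D_a(p) - g(q) + \frac{b(V(q),V(q))}{2\,D_a(p)} + \text{(higher order in } q-p\text{)}.
\end{align*}
For any smooth curve $\gamma$ in $M$ with $\gamma(0)=p$ and $\gamma'(0)=V$, the critical-point condition $dD_a|_p=0$ gives $\mathrm{hess}_b D_a(V,V)|_p = \tfrac{d^2}{dt^2}\big|_{t=0} D_a(\gamma(t))$. Differentiating (\ref{distfunc}) at $p$ yields $dV_p = -\mathrm{id}$ on $T_pM$, while the preceding Theorem delivers $\tfrac{d^2}{dt^2}\big|_0\, g(\gamma(t)) = -h(V,V)$. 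Inserting these and using (\ref{normh}) in the form $k_{M,p}(V)=-h(V,V)/b(V,V)$, one obtains
\begin{align*}
\mathrm{hess}_b D_a(V,V)\big|_p \;=\; h(V,V) + \frac{b(V,V)}{D_a(p)} \;=\; b(V,V)\!\left(\frac{1}{D_a(p)} - k_{M,p}(V)\right),
\end{align*}
which, since $b$ is positive definite, vanishes precisely when $k_{M,p}(V)=1/D_a(p)$.

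The main obstacle is the norm expansion near $\eta(p)$: it amounts to converting the Euclidean second fundamental form of $\partial B$ (encoded by $du^{-1}$) into the Hessian of $\|\cdot\|$ on the affine plane $\eta(p)+T_pM$, and the factor $\langle\eta(p),\xi(p)\rangle$ is precisely what turns the Dupin metric into the weighted Dupin metric $b$. Once that expansion is secured, the rest of the argument is bookkeeping at the critical point using the preceding Theorem.
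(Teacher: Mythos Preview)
Your argument is correct and yields more than the stated equivalence: you obtain the explicit formula
\[
\mathrm{hess}_b D_a(V,V)\big|_p \;=\; b(V,V)\!\left(\frac{1}{D_a(p)} - k_{M,p}(V)\right),
\]
from which the proposition is immediate since $b$ is positive definite. The key analytic input, the expansion $\|\eta(p)+w\| = 1 + \tfrac12 b(w,w) + O(|w|^3)$ for $w\in T_pM$, is right; a clean way to pin it down is to note that $\nabla\|\cdot\|$ at $\eta(p)$ equals $\xi(p)/\langle\eta(p),\xi(p)\rangle$ by homogeneity, and then differentiate the identity $\|\,\eta(p)+w+f(w)\xi(p)\,\|=1$ (with $f$ the graph of $\partial B$) twice at $w=0$ to get $\mathrm{Hess}\,\|\cdot\|(w,w)=\langle du^{-1}_{\eta(p)}w,w\rangle/\langle\eta(p),\xi(p)\rangle=b(w,w)$. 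Your ``radial projection'' description is a little informal but encodes exactly this computation.

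The paper takes a different route: it slices $M$ by the plane through $p$ spanned by $\eta(p)$ and $V$, observes that $a$ lies in that plane (because $p-a$ is parallel to $\eta(p)$), and then invokes a planar result from \cite{Ba-Ma-Sho} relating the second derivative of the distance-to-$a$ along the resulting curve to its circular curvature. So the paper reduces to a two-dimensional statement proved elsewhere, while you work intrinsically in three dimensions using the preceding Theorem and the Taylor expansion of the norm. Your approach is self-contained within this paper, makes the appearance of $b$ (rather than the unweighted Dupin metric) completely transparent, and actually gives the value of the Hessian, not just its zero set; the paper's approach is shorter on the page but outsources the analytic content to the cited planar proposition.
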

\begin{proof} Let $\gamma:(-\varepsilon,\varepsilon)\rightarrow M$ be an arc-length parametrization of the curve obtained as intersection of $M$ with the (translated, to pass through $p$) plane spanned by $\eta(p)$ and $V$. Assume also that $\gamma(0) = p$ and that $\gamma'(0) = V$ (in other words, we are assuming that $V$ is unit). By definition, we have that the normal curvature $k_{M,p}(V)$ is the circular curvature of $\gamma(s)$ at $s = 0$ (see \cite{diffgeom}). Observe that, since $p$ is a critical point of $D_a$, it follows that $p - a$ is in the direction of $\eta(p)$, and hence $a$ is a point of the (translated, to pass through $p$) plane spanned by $V$ and $\eta(p)$. From \cite[Proposition 9.1]{Ba-Ma-Sho} we have immediately that
\begin{align*} \left.\frac{d^2}{ds^2}\left(D_a\circ\gamma\right)\right|_{s=0} = 0 \  \Leftrightarrow \  k_{M,p}(V) = \frac{1}{D_a(p)}.
\end{align*}
Since we clearly have that $\mathrm{hess}_bD_a(V,V)|_p$ is precisely the expression on the left hand side of the equality above, the proof is complete. 

\end{proof}

\begin{remark} A concept of \emph{affine normal curvature} for \emph{Blaschke immersions} is defined in \cite[Definition 5.2]{davis}. The reader may notice that the above proposition states that the concept of Minkowski normal curvature is somehow analogous to it. Moreover, the relations between the affine normal curvature and the affine shape operator are very similar to the relations between the Minkowski normal curvature and the derivative of the Birkhoff-Gauss map, see \cite[Section 5]{davis} and \cite[Section 5]{diffgeom}.\\
\end{remark}
Let $f:M\rightarrow(\mathbb{R}^3,||\cdot||)$ be a \emph{nondegenerate} immersed surface (meaning that the affine fundamental form $h$ of $M$ has rank $2$), and fix a point $a \in \mathbb{R}^3\setminus M$. The \emph{affine distance function} from $a$ to $M$ is the function $\rho:M\rightarrow\mathbb{R}$ defined by the decomposition
\begin{align}\label{affinedist} p - a = \rho(p)\eta(p) + V(p),
\end{align} 
where $V(p) \in T_pM$. We aim to find a result similar to \cite[Proposition 6.2]{nomizu} for the affine distance function, since this would give another expression for the Minkowski mean curvature. However, since in general our immersion is not a \emph{Blaschke immersion} (see Section \ref{blaschke} for the definition), we may not expect its \emph{cubic form} to vanish, and hence we possibly need a Laplacian concept other than that used in the mentioned result. This is indeed true. We define the \emph{$\nabla$-Laplacian} of a function $f:M\rightarrow\mathbb{R}$ to be
\begin{align*} \Delta f := \mathrm{div}_{\nabla}(\mathrm{grad}_hf),
\end{align*}
where $\mathrm{grad}_hf:M\rightarrow TM$ is the \emph{gradient of $f$ with respect to $h$}, defined to be the (unique) section of $TM$ such that $Xf = h(X,\mathrm{grad}_hf)$, and $\mathrm{div}_{\nabla}:C^{\infty}(TM)\rightarrow C^{\infty}(M)$ is the \emph{divergence operator with respect to the induced connection $\nabla$}, defined formally as
\begin{align*} \mathrm{div}_{\nabla}X|_p = \mathrm{tr}\{Y\mapsto \nabla_YX:Y\in T_pM\}
\end{align*}
for sections $X \in C^{\infty}(TM)$. We can re-obtain the Minkowski mean curvature in terms of the $\nabla$-Laplacian of an affine distance function, as we will see next.  

\begin{teo} Let $\rho:M\rightarrow\mathbb{R}$ be the affine distance function with respect to a given point $a \in \mathbb{R}^3 \setminus M$, and assume that $M$ is nondegenerate. Then the following equality holds:
\begin{align*} \Delta\rho = 2(H\rho - 1).
\end{align*}
\end{teo}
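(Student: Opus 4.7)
The plan is to differentiate the defining decomposition (\ref{affinedist}) along a tangent vector field and match the $\eta$-direction and tangential parts. Fix $p\in M$ and $X\in T_pM$, and let $X$ denote also a smooth extension near $p$. Since $D_X(p-a)=X$, applying $D_X$ to (\ref{affinedist}) and using the Gauss equation $D_XV=\nabla_XV+h(X,V)\eta$ together with $D_X\eta=d\eta_pX$ yields
$$X=(X\rho)\eta+\rho\,d\eta_pX+\nabla_XV+h(X,V)\eta.$$
Because $T_{\eta(p)}\partial B$ is Euclidean-parallel to $T_pM$ (both are orthogonal to $\xi(p)$ by the relation $\eta=u\circ\xi$), the vectors $d\eta_pX$ and $\nabla_XV$ lie in $T_pM$, so separating into tangential and $\eta$-components gives the two identities
$$X\rho=-h(X,V)\qquad\text{and}\qquad \nabla_XV=X-\rho\,d\eta_pX.$$

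From the first identity and the nondegeneracy of $h$, the defining relation $Xf=h(X,\mathrm{grad}_hf)$ immediately forces $\mathrm{grad}_h\rho=-V$. Consequently $\Delta\rho=\mathrm{div}_\nabla(\mathrm{grad}_h\rho)=-\mathrm{div}_\nabla V$, and the remaining task is to compute this divergence at $p$. Taking the trace of the endomorphism $Y\mapsto \nabla_YV=Y-\rho\,d\eta_pY$ of $T_pM$ produces $\mathrm{tr}(\mathrm{id}_{T_pM})-\rho\,\mathrm{tr}(d\eta_p)=2-2\rho H$, using that the Minkowski mean curvature is by definition $H=\tfrac{1}{2}\mathrm{tr}(d\eta_p)$. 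Putting the pieces together gives $\Delta\rho=-(2-2\rho H)=2(H\rho-1)$, as required.

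The main technical point to verify is that (\ref{affinedist}) actually defines smooth functions $\rho$ and $V$ on $M$, which reduces to the transversality $\mathbb{R}^3=\mathrm{span}(\eta(p))\oplus T_pM$ at each point; this is guaranteed by admissibility, since Birkhoff orthogonality together with the strict convexity of $\partial B$ places $\eta(p)$ strictly off the tangent plane. A small but essential bookkeeping remark is that $\mathrm{grad}_h$ is taken with respect to $h$ itself, not the weighted Dupin metric $b$; this is precisely what lets the nondegeneracy of $h$ invert $X\rho=-h(X,V)$ to yield $\mathrm{grad}_h\rho=-V$ cleanly, and it explains the nondegeneracy hypothesis in the statement.
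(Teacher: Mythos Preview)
Your proof is correct and follows essentially the same approach as the paper: differentiate the decomposition (\ref{affinedist}), separate into tangential and $\eta$-components to obtain $\mathrm{grad}_h\rho=-V$ and $\nabla_XV=X-\rho\,d\eta_pX$, and then take the trace. The only cosmetic difference is that the paper computes the trace explicitly in a $b$-orthonormal basis of principal directions, whereas you invoke $\mathrm{tr}(d\eta_p)=2H$ directly; your version is slightly more streamlined but otherwise identical.
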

\begin{proof} Derivating (\ref{affinedist}) with respect to $X \in T_pM$, we get
\begin{align*} X = (X\rho)\eta + \rho D_X\eta + D_XV = (X\rho)\eta + \rho D_X\eta + \nabla_XV + h(X,V)\eta.
\end{align*}
Since $D_X\eta$ is tangential, we have that $(X\rho) = -h(X,V)$, and from this we get $\mathrm{grad}_h\rho = -V$. We also have 
\begin{align}\label{eqaff} X = \rho D_X\eta + \nabla_XV.
\end{align}
To calculate the divergence, we may use any positive definite bilinear form, and hence we use $b$. As usual, let $V_1$ and $V_2$ be principal directions of $M$ at $p$ associated to principal curvatures $\lambda_1,\lambda_2 \in \mathbb{R}$, respectively. Assume that both vectors are normalized with respect to $b$. We have
\begin{align*} \mathrm{div}_{\nabla}(\mathrm{grad}_h\rho) = b(-\nabla_{V_1}V,V_1) + b(-\nabla_{V_2}V,V_2)  = b(\rho\lambda_1V_1 - V_1,V_1) + b(\rho\lambda_2V_2 - V_2, V_2) = \\ = \rho\lambda_1 + \rho\lambda_2 - 2 = 2(H\rho - 1),
\end{align*}
where the second equality comes from (\ref{eqaff}). 

\end{proof}
\begin{remark} We say that an immersed surface is \emph{minimal} (in the Minkowski sense) if its Minkowski mean curvature vanishes everywhere. The main interest in the above theorem is that it means, in particular, that a nondegenerate immersed surface $M$ is minimal if and only if $\Delta\rho = -2$. This characterizes nondegenerate Minkowski minimal surfaces in terms of a partial differential equation for the affine distance function. \\
\end{remark}

In affine differential geometry, it is known that the affine distance function from a fixed point $a \in \mathbb{R}^{n+1}$ to a nondegenerate Blaschke hypersurface $M$ is constant if and only if $M$ is a \emph{proper affine hypersphere} with center $a$ (see \cite[Proposition 5.10]{nomizu}). Next we prove something analogous for normed spaces, characterizing Minkowski spheres.

\begin{prop} Let $f:M\rightarrow(\mathbb{R}^3,||\cdot||)$ be a nondegenerate surface immersion, and let $a \in \mathbb{R}^3$. Denote by $\rho:M\rightarrow\mathbb{R}$ the affine distance function from $a$ to $M$, as defined in \emph{(\ref{affinedist})}. Then, $\rho$ is constant if and only if $M$ is contained in a Minkowski sphere with $a$ as center. 
\end{prop}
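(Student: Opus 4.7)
The plan is to exploit the identity $\mathrm{grad}_{h}\rho = -V$ derived in the proof of the preceding theorem, together with the nondegeneracy of $h$, to show that constancy of $\rho$ is equivalent to the vanishing of the tangential component $V$ in the decomposition (\ref{affinedist}).

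For the implication ``$\rho$ constant $\Rightarrow$ $M$ contained in a Minkowski sphere centered at $a$'', I would argue as follows. If $\rho$ is constant then $X\rho = 0$ for every $X \in T_pM$. But the computation in the proof of the previous theorem gives $X\rho = -h(X,V)$, so $h(X,V) = 0$ for all $X \in T_pM$. Since $M$ is nondegenerate, $h$ has rank $2$ on each tangent space, and this forces $V(p) = 0$ for every $p \in M$. Substituting into (\ref{affinedist}) yields $p - a = \rho\,\eta(p)$, and because $\eta(p) \in \partial B$ is a Minkowski unit vector, we obtain $\|p-a\| = |\rho|$, a constant. Hence $M$ is contained in the Minkowski sphere of radius $|\rho|$ centered at $a$.

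For the converse, suppose $M \subseteq S_{r}(a)$ for some $r > 0$. At each $p \in M$ the two-dimensional tangent plane $T_pM$ must coincide with $T_pS_r(a)$, and since $S_r(a)$ is a homothetic copy of $\partial B$ centered at $a$, the vector $p-a$ is Birkhoff orthogonal to $T_pS_r(a) = T_pM$. By uniqueness of the Birkhoff normal direction (admissibility of the norm), $\eta(p)$ is a scalar multiple of $p-a$; being unit, $\eta(p) = \pm(p-a)/r$. Then (\ref{affinedist}) forces $V(p) = 0$ and $\rho(p) = \pm r$, a constant (the sign being fixed by the choice of orientation of $\eta$).

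I do not expect any real obstacle here: both directions reduce to reading off information from the decomposition (\ref{affinedist}), the key analytic input being the formula $X\rho = -h(X,V)$ together with the rank-$2$ hypothesis on $h$. The only point requiring a bit of care is the converse, where one must invoke uniqueness of Birkhoff orthogonality (a consequence of admissibility, as recalled in the introduction) to conclude that $\eta(p)$ is radial; this is what ties the geometric condition ``$M \subseteq S_r(a)$'' back to the analytic decomposition defining $\rho$.
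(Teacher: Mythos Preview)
Your argument is correct. The forward direction agrees with the paper up to the point where nondegeneracy forces $V\equiv 0$; from there the paper takes a slightly different route: it substitutes $V=0$ back into the tangential part of the differentiated decomposition to obtain $d\eta_p X = \rho^{-1}X$ for every $X$, concludes that every point of $M$ is umbilic, and then invokes the characterization of Minkowski spheres as the totally umbilic surfaces (\cite[Proposition~4.5]{diffgeom}). Your approach is more elementary: once $p-a=\rho\,\eta(p)$ with $\eta(p)\in\partial B$, taking norms immediately gives $\|p-a\|=|\rho|$ constant, so no appeal to the umbilic characterization is needed. The paper's route yields the extra information that $d\eta_p=\rho^{-1}\mathrm{Id}$, but for the statement as phrased your shortcut is sufficient and cleaner. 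For the converse the paper simply declares it ``immediate''; your detailed justification via uniqueness of Birkhoff orthogonality is exactly the argument one would supply if pressed.
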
  
\begin{proof} The ``only if" part is immediate. Differentiating (\ref{affinedist}) in $p \in M$ and with respect to a direction $X \in T_pM$ yields the equality
\begin{align*} X = X(\rho)\eta + \rho D_X\eta + \nabla_XV + h(X,V)\eta,
\end{align*}
and hence $X(\rho) = -h(X,V)$. Since $M$ is nondegenerate, if $\rho$ is constant we have $V = 0$. Therefore, $X = \rho D_X\eta$, and hence
\begin{align*} d\eta_p(X) = \frac{1}{\rho}X,
\end{align*} 
for any $p \in M$ and $X \in T_pM$. As a consequence, all points of $M$ are umbilic points, and this characterizes Minkowski spheres (see \cite[Proposition 4.5]{diffgeom}).

\end{proof}

\section{The Birkhoff normal as the affine normal} \label{blaschke}

A transversal vector field $\xi$ in an immersed surface $f:M\rightarrow(\mathbb{R}^3,||\cdot||)$ induces two natural volume elements. The \emph{induced volume} is the $2$-form given by $\omega(X,Y):=\mathrm{det}[X,Y,\xi]$, where $\mathrm{det}$ denotes the usual determinant in $\mathbb{R}^3$. Also, the affine fundamental form $h$ associated to $\xi$ induces the volume form $\omega_h(X,Y):=|\mathrm{det}[h_{ij}]|^{1/2}$, where $[h_{ij}]$ is the matrix of $h$ with respect to the vectors $X$ and $Y$. A \emph{Blaschke immersion} is an immersion endowed with an equiaffine transversal vector field for which $|\omega| = \omega_h$, where $|\cdot|$ denotes the usual absolute value in $\mathbb{R}$ (see \cite{nomizu} for more information on Blaschke immersions).\\

It is well known that for a nondegenerate immersed surface $f:M\rightarrow \mathbb{R}^3$ there exists (locally) a transversal vector field that makes $f$ a Blaschke immersion, and that this vector field is unique up to the sign (see \cite{nomizu} for a proof). We call it the \emph{affine normal field} of $M$. It is also clear that the affine normal field can be globally defined if and only if $M$ is orientable.  The natural question that arises here is: when is the Birkhoff normal vector field the affine normal field of an immersed surface? A first result in this direction discusses whether the Birkhoff normal of the unit sphere is the affine normal. This question is independent of Minkowski geometry: we are asking whether the position vector of a centrally symmetric smooth and strictly convex body is its affine normal. We consider first the planar case, where the immersed hypersurfaces are curves. 

\begin{teo} The unit circle of a normed plane with the Birkhoff normal field as transversal field is a Blaschke immersion if and only if the plane is Euclidean. 
\end{teo}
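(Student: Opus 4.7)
The plan is first to identify the Birkhoff normal along $\partial B$ explicitly. Since the tangent line $T_p\partial B$ supports the ball $B$ at $p$ itself, the unique unit vector Birkhoff orthogonal to $T_p\partial B$ is $\eta(p) = p$; the transversal field is literally the position vector. It is automatically equiaffine because $D_{p'}\eta = D_{p'}p = p'$ is tangent to $\partial B$. So the theorem reduces to deciding when the volume normalization $|\omega| = \omega_h$ holds with the position vector as transversal.

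Parametrizing $\partial B$ as a smooth curve $p(t)$ enclosing the origin and writing the Gauss equation $p''(t) = \alpha(t)\,p'(t) + h(p',p')\,p(t)$, one takes the determinant of both sides against $p'$ to obtain $h(p',p') = \det[p',p'']/\det[p',p]$. Since on a curve the induced volume is $\omega(p') = \det[p',p]$ and the $h$-volume is $\omega_h(p') = |h(p',p')|^{1/2}$, the Blaschke condition becomes
\[
|\det[p',p]|^{3} \;=\; |\det[p', p'']|.
\]
I would next reparametrize so that $\det[p'(t),p(t)] \equiv c$ is a nonzero constant, which is possible because $\det[p',p]$ never vanishes on a convex curve enclosing the origin (just rescale $t$ by a factor proportional to $\det[p',p]^{-1}$). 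Differentiating gives $\det[p''(t),p(t)] = 0$, so $p''(t) = \mu(t)\,p(t)$ for some scalar $\mu$, and plugging back into the Blaschke relation yields $|\mu(t)| \equiv c^{2}$.

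By continuity, $\mu$ has constant sign, and the compactness of $\partial B$ rules out $\mu \equiv +c^{2}$ (which would force unbounded solutions $A\cosh(ct)+B\sinh(ct)$), so $\mu \equiv -c^{2}$. The ODE $p'' + c^{2}p = 0$ then has general solution $p(t) = A\cos(ct) + B\sin(ct)$, which parametrizes an ellipse centered at the origin. Ellipsoidal unit balls are exactly the unit balls of norms induced by an inner product, so the plane is Euclidean; the converse is a direct verification on the standard parametrization of a Euclidean circle. The main subtlety lies in setting up the correct one-dimensional analogues of $\omega$ and $\omega_h$ and in pinning down the sign of $\mu$; once that is in hand, the ODE classification immediately forces $\partial B$ to be a centered ellipse.
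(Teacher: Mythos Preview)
Your argument is correct and follows a genuinely different route from the paper's. The paper fixes an auxiliary Euclidean structure, parametrizes $\partial B$ by Euclidean arc-length, and expresses the Blaschke condition as $k_e = g^3$, where $k_e$ is the Euclidean curvature and $g = \langle\varphi,\xi\rangle$ the Euclidean support function. Passing to the angle parameter $\theta$ and using $k_e^{-1} = g_{\theta\theta} + g$, this becomes the Ermakov--Pinney equation $g_{\theta\theta} + g = g^{-3}$, whose only $\pi$-periodic solution is $g\equiv 1$; hence $\partial B$ is the Euclidean circle.

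Your approach avoids any auxiliary inner product. By reparametrizing so that the ``areal velocity'' $\det[p',p]$ is constant, you force $p''$ to be proportional to $p$, and the Blaschke identity $|\det[p',p]|^{3} = |\det[p',p'']|$ pins the proportionality factor to $\pm c^{2}$. Periodicity of the closed curve then excludes the hyperbolic sign (e.g.\ because $t\mapsto\langle p(t),p(t)\rangle$ would be strictly convex for any background inner product), leaving $p'' + c^{2}p = 0$ and hence a centered ellipse. This is more elementary and more intrinsically affine: it bypasses the Ermakov--Pinney uniqueness result and never singles out a Euclidean background. The paper's route, on the other hand, ties the problem to classical support-function formalism and to an ODE of independent interest. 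Both arguments are short; yours is arguably cleaner for this particular statement, while the paper's fits the Euclidean/Minkowski comparison theme running through the rest of the article.
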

\begin{proof} Assume that we have a usual auxiliary Euclidean structure in $\mathbb{R}^2$. Let $\varphi(s):[0,l_e(S)]\rightarrow (\mathbb{R}^2,||\cdot||)$ be a parametrization of the unit circle by \textbf{Euclidean} arc-length, where $l_e(S)$ denotes the Euclidean length of the unit circle. The Gauss equation reads
\begin{align*} \varphi''(s) = f(s)\varphi'(s) + h(s)\varphi(s),
\end{align*}
for some functions $f,h:[0,l_e(S)]\rightarrow\mathbb{R}$ (here, $f\varphi'$ is the induced connection). Let $\xi:[0,l_e(S)]\rightarrow \mathbb{R}^2$ be the Euclidean unit normal field (outward pointing). Taking inner products, we have
\begin{align*} h = \frac{\langle\varphi'',\xi\rangle}{\langle \varphi,\xi\rangle},
\end{align*}
where we omit the parameter for simplicity. Since $\xi$ is unit and $\varphi$ is a parametrization on Euclidean arc-length, it follows that $\langle \varphi'',\xi\rangle = -k_e$, the Euclidean curvature of $S$. Notice that, moreover, the function $g:=\langle\varphi,\xi\rangle$ is the usual support function of $S$. Now, if $\varphi$ is the affine normal, we get the equality
\begin{align*} \frac{k_e}{g} = |h| = [\varphi',\varphi]^2 = [\varphi',\langle\varphi,\xi\rangle\xi + \langle\varphi',\varphi\rangle\varphi']^2 = \langle\varphi,\xi\rangle^2 = g^2.
\end{align*}
It follows that $k_e = g^3$. Let now $\theta$ be the parameter of $S$ by the angle between the tangent direction and the $x$-axis. Then it is known that the Euclidean curvature is given in terms of the support function by 
\begin{align*}k_e^{-1} = \frac{d^2g}{d\theta^2} + g,
\end{align*}
and hence the Euclidean support function of the unit circle is a $\pi$-periodic solution of the Ermakov-Pinney equation (see \cite{pinney}), namely
\begin{align*} \frac{d^2g}{d\theta^2} + g = g^{-3}.
\end{align*}
Therefore, by uniqueness it follows that $g = 1$ (cf. \cite[Theorem 5.1]{Ba-Ma-Sho}). Recalling that $g$ is the Euclidean support function of $S$, we have that $S$ is indeed the Euclidean unit circle. 

\end{proof}

Now we prove the three-dimensional version of the previous theorem. 

\begin{teo} Let $||\cdot||$ be an admissible norm in $\mathbb{R}^3$. Then the Birkhoff normal vector field of the unit sphere $\partial B$ is the affine normal field if and only if the norm is derived from an inner product.
\end{teo}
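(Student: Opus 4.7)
The plan is to reduce the only-if direction to Blaschke's classical classification of compact affine hyperspheres. First, for every $q \in \partial B$ the tangent plane $T_q\partial B$ is the unique supporting plane of the unit ball $B$ at $q$, so by the geometric description of Birkhoff orthogonality the position vector $q$ is Birkhoff orthogonal to $T_q\partial B$; since $||q||=1$, the Birkhoff-Gauss map of $\partial B$ is simply $\eta(q) = q$, i.e.\ $\eta$ coincides with the position vector field $\varphi$. Supposing now that $\eta = \varphi$ is the affine normal field, every affine normal line of $\partial B$ passes through the origin, which exhibits $\partial B$ as a \emph{proper affine hypersphere} centered at the origin. Since $\partial B$ is compact, smooth, and nondegenerate (by admissibility of the norm), Blaschke's classical theorem (see \cite{nomizu}) asserts that any such hypersurface is an ellipsoid. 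Hence $\partial B = \{x \in \mathbb{R}^3 : x^T A x = 1\}$ for some positive-definite symmetric matrix $A$, and $||x|| = \sqrt{x^T A x}$ is the norm derived from the inner product $(x, y) \mapsto x^T A y$.

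For the converse, if the norm arises from an inner product, then $\partial B$ is a centered ellipsoid, affine-equivalent to the Euclidean unit sphere via a linear isomorphism $L$. A direct calculation verifies that $-\varphi$ is the affine normal of the Euclidean unit sphere (the Blaschke identity $|\omega|=\omega_h=1$ is checked immediately in any orthonormal tangent frame with $\xi = -\varphi$), and since the affine normal is equivariant under linear maps (up to a determinant-scaling factor), the affine normal of $\partial B$ is a nonzero multiple of $-\varphi$. A normalization of the inner product (equivalently, a rescaling of $A$) then aligns the magnitudes and realizes $\eta = \varphi$ as the affine normal field up to sign.

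The main obstacle is the invocation of Blaschke's rigidity theorem for compact affine spheres --- a classical result in affine differential geometry which does the essential work. A purely computational alternative would be to reduce the Blaschke condition $|\omega|=\omega_h$ for the transversal vector $\varphi$ on $\partial B$ to a Monge--Amp\`{e}re equation of the form $K_e = g_e^4$ on $S^2$, relating the Euclidean Gaussian curvature and support function of $\partial B$, and then prove directly that the only centrally symmetric solution is $g_e \equiv 1$ --- in analogy with the 2D proof via the Ermakov--Pinney equation. This direct rigidity statement is, however, of comparable depth to Blaschke's theorem itself, which is why the cleaner route passes through it.
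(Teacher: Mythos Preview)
Your argument for the nontrivial (``only if'') direction is correct, but it takes a genuinely different route from the paper. You observe that the Birkhoff normal of $\partial B$ is the position vector, so the hypothesis makes $\partial B$ a compact proper affine hypersphere with center $0$, and then you invoke Blaschke's classical rigidity theorem to conclude that $\partial B$ is an ellipsoid. The paper, by contrast, gives a self-contained elementary computation: working in an orthonormal frame $\{E_1,E_2\}$ of Euclidean principal directions, it computes $\omega$ and $\omega_h$ explicitly and obtains the relation $\langle\eta,\xi\rangle=K^{1/4}$ (exactly the relation $K_e=g_e^{4}$ you mention in your final paragraph). It then feeds this into the explicit formula $\eta=K^{1/4}\xi+Z$ for the affine normal, where $Z=\mathrm{grad}_h(K^{1/4})$, and by comparing the decompositions of $\eta$ and of $K^{1/4}\xi+Z$ in the basis $\{E_1,E_2,\xi\}$ deduces $\lambda_1=\lambda_2=-\langle\eta,\xi\rangle^{-1}$. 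Thus every point of $\partial B$ is Euclidean-umbilic and $\partial B$ is a Euclidean sphere. Your approach is shorter and conceptually cleaner, at the cost of importing a substantial classical theorem; the paper's approach is more hands-on and avoids any black box, extracting the umbilicity directly from the Blaschke condition.

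One small caveat on your converse: the sentence ``a normalization of the inner product \ldots\ aligns the magnitudes'' is not quite a proof, since rescaling the inner product changes the norm and hence changes $\partial B$ itself. In fact the affine normal of a general centred ellipsoid is only a constant multiple of the position vector, and that constant equals $1$ precisely when the ellipsoid has the right volume normalisation relative to the fixed determinant on $\mathbb{R}^3$. The paper's own proof actually shows the stronger conclusion that $\partial B$ is the \emph{unit} sphere of the auxiliary Euclidean structure, so you may want to state the converse with that normalisation in mind (the paper does not spell out the converse either).
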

\begin{proof} Before we start, we underline that our proof is quite independent of the theory developed here, which is used only as inspiration. Let $p \in \partial B$, and $E_1,E_2\in C^{\infty}(U)$ be vector fields given by the (Euclidean) orthonormal principal directions of $\partial B$ at each point. Also, let $\lambda_1,\lambda_2$ denote the (Euclidean) principal curvatures of $\partial B$ at each point. If $\eta$ denotes the Birkhoff-Gauss map of $\partial B$ (which is precisely the position vector, since we are considering the geometry given by $\partial B$), then equality (\ref{exph}) gives the following equations:
\begin{align*} h(E_1,E_1) = -\frac{\langle E_1,d\xi_qE_1\rangle}{\langle\eta,\xi\rangle} = -\frac{\lambda_1}{\langle\eta,\xi\rangle} \ \ \mathrm{and}\\
h(E_2,E_2) = -\frac{\langle E_2,d\xi_qE_2\rangle}{\langle\eta,\xi\rangle} = -\frac{\lambda_2}{\langle\eta,\xi\rangle},
\end{align*}
for any $q \in U$, where $h$ is, as usual, the affine fundamental form induced by the transversal vector field $\eta$. Also, since $E_1$ and $E_2$ are conjugate directions, it follows that $D_{E_1}E_2$ is tangential. Therefore, we have $h(E_1,E_2) = 0$. Thus, the volume form induced by $h$ is given by
\begin{align*} \omega_h(E_1,E_2) = \left(\frac{\lambda_1\lambda_2}{\langle\eta,\xi\rangle^2}\right)^{\frac{1}{2}} = \frac{(\lambda_1\lambda_2)^{\frac{1}{2}}}{\langle\eta,\xi\rangle}.
\end{align*}
On the other hand, the volume form $\omega$ induced by the Birkhoff normal vector field is defined as
\begin{align*} \omega(E_1,E_2) = \mathrm{det}[E_1,E_2,\eta] = \langle\eta,\xi\rangle,
\end{align*}
where, up to a re-orientation, we may assume $\langle\eta,\xi\rangle > 0$. By definiton, $\eta$ is the affine normal if and only if $\omega_h = \omega$. Then the assumption that $\eta$ is the affine normal of $\partial B$ yields
\begin{align*} \langle\eta,\xi\rangle = (\lambda_1\lambda_2)^{\frac{1}{4}}.
\end{align*}
Notice that $\lambda_1\lambda_2$ is the (Euclidean) Gaussian curvature of $\partial B$. Let us use the notation $K := \lambda_1\lambda_2$. Following \cite[Example 3.4]{nomizu}, the affine normal of $\partial B$ must be given by
\begin{align}\label{affnoreq} \eta = K^{\frac{1}{4}}\xi + Z,
\end{align}
where $\xi$ is the Euclidean normal vector, and $Z$ is the gradient of the function $K^{\frac{1}{4}}$ with respect to $h$. Thus, $Z\in T_qM$ is, at each point $q \in U$, the vector for which
\begin{align*} h(Z,X) = X\left(K^{\frac{1}{4}}\right)
\end{align*}
holds for every smooth vector field $X$. Since $K^{\frac{1}{4}} = \langle\eta,\xi\rangle$, this equality reads
\begin{align*} h(Z,X) = X(\langle\eta,\xi\rangle) = \langle D_X\eta,\xi\rangle + \langle \eta,D_X\xi\rangle  = \langle \eta,d\xi_qX\rangle.
\end{align*}
Then, replacing $X$ by $E_1$ and $E_2$, respectively, and using again formula (\ref{exph}), we have
\begin{align*} \langle Z,E_1\rangle = -\lambda_1\langle\eta,\xi\rangle\langle \eta,E_1\rangle \ \ \mathrm{and} \\
\langle Z,E_2\rangle = -\lambda_2\langle\eta,\xi\rangle\langle\eta,E_2\rangle.
\end{align*}
Finally, (\ref{affnoreq}) and the decomposition of $Z$ in the basis $\{E_1,E_2\}$ and of $\eta$ in the basis $\{E_1,E_2,\xi\}$ yield
\begin{align*} K^{\frac{1}{4}}\xi + Z = \langle\eta,\xi\rangle\xi -\lambda_1\langle\eta,\xi\rangle\langle \eta,E_1\rangle E_1 -\lambda_2\langle\eta,\xi\rangle\langle\eta,E_2\rangle E_2 = \\ = \langle\eta,\xi\rangle\xi + \langle\eta,E_1\rangle E_1 + \langle\eta,E_2\rangle E_2.
\end{align*}
Therefore, we have that
\begin{align*} \lambda_1 = \lambda_2 = -\frac{1}{\langle\eta,\xi\rangle}.
\end{align*}
It follows that every point of $\partial B$ is umbilic (in the Euclidean sense). Therefore, $\partial B$ must be a Euclidean circle.

\end{proof}

Our next concern is an existence problem: Let $||\cdot||$ be an admissible norm yielding a Minkowski geometry in $\mathbb{R}^3$. Can we always guarantee that an immersed surface exists which, when endowed with the Birkhoff-Gauss map, is a Blaschke immersion? We show now that the answer is no, except for the Euclidean subcase.

\begin{teo} Assume that $f:M\rightarrow(\mathbb{R}^3,||\cdot||)$ is a connected, compact and immersed surface without boundary, where $||\cdot||$ is admissible. Then the affine normal of $M$ equals its Birkhoff normal if and only if the norm is Euclidean and $M$ is a sphere.
\end{teo}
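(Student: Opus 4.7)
The \emph{if} direction is immediate: for a Euclidean sphere $M$ in a Euclidean $\mathbb{R}^3$ the outward Euclidean unit normal is simultaneously the Birkhoff normal and the affine normal. For the \emph{only if} direction my plan is to mimic the computation from the proof of the preceding theorem, replacing the principal curvatures of $\partial B$ by those of $M$. Choose locally a Euclidean orthonormal frame $\{E_1,E_2\}$ of principal directions of $M$, with Euclidean principal curvatures $\lambda_1^M,\lambda_2^M$. Evaluating $\omega(E_1,E_2) = \langle\eta,\xi\rangle$ and $\omega_h(E_1,E_2) = (\lambda_1^M\lambda_2^M)^{1/2}/|\langle\eta,\xi\rangle|$ by means of (\ref{exph}), the assumption $|\omega| = \omega_h$ yields $\langle\eta(p),\xi(p)\rangle = K_{\mathrm{Eu},M}(p)^{1/4}$ at every $p \in M$, where $K_{\mathrm{Eu},M} = \lambda_1^M\lambda_2^M$. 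In particular $K_{\mathrm{Eu},M} > 0$ throughout, so $M$ is nondegenerate and the affine-normal decomposition $\eta = K_{\mathrm{Eu},M}^{1/4}\xi + Z$ (with $Z \in TM$ the $h$-gradient of $K_{\mathrm{Eu},M}^{1/4}$) applies.

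The next step is to differentiate the relation $K_{\mathrm{Eu},M}^{1/4} = \langle\eta,\xi\rangle$ using the equiaffine property $D_X\eta \in TM$; this gives $E_i(\langle\eta,\xi\rangle) = \lambda_i^M\langle\eta,E_i\rangle$. On the other hand, formula (\ref{exph}) gives $h(Z,E_i) = -\lambda_i^M\langle Z,E_i\rangle/\langle\eta,\xi\rangle$, and equating these two expressions (using $\lambda_i^M \neq 0$) yields $\langle Z,E_i\rangle = -\langle\eta,\xi\rangle\langle\eta,E_i\rangle$. Substituting this into $\eta = \langle\eta,\xi\rangle\xi + Z$ and comparing with the orthonormal expansion $\eta = \sum_i\langle\eta,E_i\rangle E_i + \langle\eta,\xi\rangle\xi$ produces, after a brief algebraic manipulation, the identity $(1+\langle\eta,\xi\rangle)(\langle\eta,E_1\rangle E_1 + \langle\eta,E_2\rangle E_2) = 0$. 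Since the orientations are chosen so that $\langle\eta,\xi\rangle > 0$, this forces $\eta(p) \parallel \xi(p)$ at every $p \in M$.

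To finish, I would exploit the compactness of $M$: the Euclidean Gauss map $\xi\colon M \to S^2$ is surjective (every $v \in S^2$ is attained at a maximum of the linear functional $\langle\cdot,v\rangle$ on $M$), so the previous step implies $u(v) \parallel v$ for every $v \in S^2$; equivalently, at every point of $\partial B$ the outward Euclidean normal is radial. This characterizes Euclidean spheres centered at the origin, so $B$ is a Euclidean ball and the norm is derived from an inner product. In this Euclidean setting $\eta$ equals a positive scalar multiple of $\xi$, so $\langle\eta,\xi\rangle$ is constant, and the identity $\langle\eta,\xi\rangle = K_{\mathrm{Eu},M}^{1/4}$ makes $K_{\mathrm{Eu},M}$ a positive constant on $M$; by Liebmann's classical rigidity theorem for closed surfaces of constant positive Gauss curvature, $M$ is a Euclidean sphere.

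The hardest step is the algebraic extraction of the pointwise parallelism $\eta \parallel \xi$ from the affine-normal decomposition; this is the heart of the argument, and it follows the same pattern as the proof of the preceding theorem for $\partial B$. Once this parallelism is established, the concluding rigidity statements (that $B$ is Euclidean and that $M$ is a round sphere) reduce to classical facts --- surjectivity of the Gauss map for compact surfaces, and Liebmann's theorem.
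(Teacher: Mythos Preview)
Your argument is correct and follows essentially the same route as the paper: both use the affine-normal decomposition $\eta = K^{1/4}\xi + Z$ together with the equiaffine property $D_X\eta \in TM$ to force $Z = 0$ (equivalently $\eta\parallel\xi$), and then finish via Liebmann's theorem and constancy of the support function of $\partial B$. The paper's computation is a bit more streamlined --- it obtains $0=\langle D_X\eta,\xi\rangle=h(Z,X)(1+\langle\eta,\xi\rangle)$ in one line without introducing the principal frame $\{E_1,E_2\}$ --- and it draws the conclusions in the reverse order (first $K$ constant, then $M$ a Euclidean sphere, then the norm Euclidean), but the substance is the same.
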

\begin{proof} Let $K$ be the (Euclidean) Gaussian curvature of $M$, and let $\eta:M\rightarrow\partial B$ be its Birkhoff normal field. If $\eta$ is the affine normal field, then from the proof of the last theorem we have
\begin{align*} \eta = K^{\frac{1}{4}}\xi + Z,
\end{align*}
where $Z \in T_pM$ is the vector for which $h(Z,X) = X(K^{\frac{1}{4}})$ holds for each $X \in T_pM$. Our result will be a consequence of the fact that the derivatives of $\eta$ and $\xi$ are always tangential. Indeed, for any $p \in M$ and $X \in T_pM$ we have the equality
\begin{align*} D_X\eta = X(K^{\frac{1}{4}})\xi + K^{\frac{1}{4}}D_X\xi + D_XZ = h(Z,X)\xi + K^{\frac{1}{4}}D_X\xi  + \nabla_XZ + h(X,Z)\eta,
\end{align*}
 and hence $0 = \langle D_X\eta,\xi\rangle = h(Z,X)(1+\langle\eta,\xi\rangle)$. Since $\langle\eta,\xi\rangle$ can be assumed to be positive (after re-orienting $\eta$, if necessary), it follows that $h(Z,X) = 0$ for any $p \in M$ and $X \in T_pM$. Therefore, the Euclidean Gaussian curvature of $M$ is constant. It follows that $M$ must be contained in a Euclidean sphere. Also we have that $\langle\eta,\xi\rangle$ is constant, since $\langle\eta,\xi\rangle = K^{\frac{1}{4}} = c$, say. Since $\eta$ can be regarded as the position vector of $\partial B$, it follows that the (Euclidean) support function of $\partial B$ is constant. Thus, $M$ is a Euclidean sphere.

\end{proof}

\begin{remark} If we drop the hypothesis of $M$ being compact, then we would clearly still have that $M$ is contained in a Euclidean sphere, and that there is a ``portion" of $\partial B$ which is a piece of a Euclidean sphere. More precisely, this would be the subset of $\partial B$ of points at which $\partial B$ is supported by a hyperplane parallel to some tangent space of $M$.
\end{remark}

\bibliography{bibliography.bib}

\end{document}